\documentclass[a4paper,10.1pt]{amsart}

\usepackage{amsmath}
\usepackage{amssymb}
\usepackage{amsthm}
\usepackage{verbatim}
\usepackage[all]{xy}
\usepackage{enumerate}

\usepackage{hyperref}

\usepackage{color}

\usepackage{float}

\usepackage{algorithm}
\usepackage[noend]{algpseudocode}

\usepackage[all]{xy}

\newtheorem{thm}{Theorem}[section]
\newtheorem{prop}[thm]{Proposition}
\newtheorem{cor}[thm]{Corollary}
\newtheorem{lem}[thm]{Lemma}

\theoremstyle{definition}

\newtheorem{rmk}[thm]{Remark}

\newtheorem{conj}[thm]{Conjecture}

\numberwithin{equation}{section}
\textwidth 5.3truein
\pagestyle{myheadings}

\newcommand{\cM}{\mathcal{M}}

\newcommand{\CB}{\mathcal{CB}}

\newcommand{\id}{\textrm{id}}

\newcommand{\cB}{\mathcal{B}}

\newcommand{\supp}{{\rm supp}}

\newcommand{\cS}{\mathcal{S}}

\newcommand{\CBS}{\mathcal{M}^{cb}}
\newcommand{\BS}{\mathcal{M}}

\newcommand{\B}{\mathcal{B}}
\newcommand{\ran}{{\rm ran}}

\MakeRobust{\Call}


\title[On the complete bounds of $L_p$-Schur multipliers]{On the complete bounds of $L_p$-Schur multipliers}

\date{\today, {\it MSC2010}: 47B10, 47L20, 47A30. Key words: Schur multipliers, Non-commutative $L_p$-spaces, Operator spaces.}


\address{M. Caspers, G. Wildschut, TU Delft, DIAM, Analysis, Van Mourik Broekmanweg 6, 2628 XE Delft, The Netherlands}

\author{Martijn Caspers and Guillermo Wildschut}

\begin{document}

\maketitle

\begin{abstract} 
We study the class $\BS_p$ of Schur multipliers on the Schatten-von Neumann  class $\cS_p$ with $1 \leq p \leq \infty$   as well as the class of completely bounded Schur multipliers $\CBS_p$. 
We first show that for $2 \leq p < q \leq \infty$ there exist  $m \in  \CBS_p$ with $m \not \in \BS_q$,  so in particular the following inclusions that follow from interpolation are strict  $\BS_q \subsetneq  \BS_p$ and $\CBS_q \subsetneq \CBS_p$. In the remainder of the paper we collect computational evidence that for $p\not = 1,2, \infty$ we have $\BS_p = \CBS_p$, moreover with equality of bounds and complete bounds. This would suggest that a conjecture raised in \cite{PisierAsterisque} is false.
\end{abstract}

 \section{Introduction}
 
The Schur product of matrices is given by the entry-wise product. For $m \in M_n(\mathbb{C})$ the linear map 
\[
M_m: M_n(\mathbb{C}) \rightarrow M_n(\mathbb{C}): x \mapsto M_m(x) := (m_{i,j} x_{i,j})_{i,j},
\]
is then called a Schur multiplier.  

 Schur multipliers appear in several different contexts.  They are widely applied in harmonic analysis because of their close connection with Fourier multipliers and transference techniques, see e.g. \cite{BozejkoFendler},  
  \cite{NeuwirthRicard},  \cite{CaspersDelaSalle}. In operator theory Schur multipliers of divided differences occur naturally in problems involving commutators of operators, see e.g.   \cite{PotapovSukochevActa} and references given there. Further, recently new applications of transference techniques have been found in approximation properties of Lie groups \cite{LafforgueDelaSalle}, \cite{Laat}, \cite{LaatSalle}. In each of these applications crucial new results were obtained on the (complete) bounds of Schur multipliers. 

 The boundedness properties of $M_m$ depend on the norm imposed on $M_n(\mathbb{C})$. If $M_n(\mathbb{C})$ is equipped with the operator norm, the bounds of $M_m$ can be described by Grothendieck's characterization \cite[Theorem 5.1]{PisierBook}. In particular the bounds and complete bounds of a Schur multiplier agree and in an infinite dimensional setting we see that every bounded multiplier is in particular automatically completely bounded.  
 If $M_n(\mathbb{C})$ is equipped with the Schatten $\cS_p$-norm then finding bounds, or even optimal bounds, of $M_m$ becomes very complicated as there is no such charaterization as Grothendieck's available.

In the current paper we show two things. Let $\cM_p$ (resp. $\cM_p^{cb}$) be the collection of symbols $m$ that are bounded (resp. completely bounded) Schur multipliers of  the Schatten-von Neumann classes $\cS_p$ associated with an infinite dimensional Hilbert space. We refer to Section \ref{Sect=Prelim} for exact definitions. 
  Through complex interpolation we have that $\cM_q \subseteq \cM_p$ in case $2 \leq p < q \leq \infty$. We show that this inclusion is strict; in fact we get a slightly stronger result in particular yielding the parallel result on the complete bounds as well. This extends the results by Harcharras \cite[Theorem 5.1]{Harcharras} which proves this for even $p$ and it settles the question of strict inclusions (the problem was also stated in \cite[p. 51]{Doust}). 
  
  Secondly, we study the question whether $\cM_p$ and $\cM_p^{cb}$ are equal  for $1 < p \not = 2 < \infty$. In fact, the following conjecture is stated  in \cite{PisierAsterisque}: 
  
  \begin{conj}[Conjecture 8.1.12 in \cite{PisierAsterisque}]\label{Conj=Pisier} 
  	For every $1 < p \not = 2 < \infty$ we have that $\BS_p \not = \CBS_p$. 
  \end{conj}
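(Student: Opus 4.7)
To establish the conjecture, one would need to exhibit, for each $1 < p \neq 2 < \infty$, an explicit symbol $m \in \BS_p \setminus \CBS_p$. A natural first strategy is transference. By Neuwirth--Ricard and its refinements, the cb-Fourier-multiplier norm of a symbol $m : G \to \mathbb{C}$ on $L_p(VN(G))$ always coincides with the cb-Schur-multiplier norm of the Herz--Schur symbol $\tilde m(g,h) := m(gh^{-1})$ on $\cS_p(\ell_2(G))$; the corresponding identity for the bounded norms holds whenever $G$ is amenable. It would therefore suffice to find a discrete amenable group $G$ together with a symbol $m$ that is bounded but not completely bounded as a Fourier multiplier on $L_p(VN(G))$, and transfer it to a Schur symbol.

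\textbf{Concrete construction.} Mirroring Harcharras's randomised separation $\BS_q \subsetneq \BS_p$ recalled in the introduction, I would work on $G_N = \mathbb{Z}/N\mathbb{Z}$ for $N \gg n$, pick a random subset $E \subset G_N$ of cardinality $n$, and form the symbol $m = \varepsilon_\cdot \, \mathbf{1}_E$ with i.i.d.\ signs $\varepsilon_g \in \{-1,+1\}$. For even $p$ the ordinary multiplier norm $\|\tilde m\|_{\BS_p}$ is computable by Harcharras's combinatorial trace identity, and one can try to obtain a matching estimate for non-even $p$ by Stein-type complex interpolation between two even endpoints. The cb-norm would be lower-bounded by pairing $\tilde m$ against a tensor-extended test matrix (for instance a random contraction with values in $\cS_p$) designed to activate the extra row/column structure invisible to the bounded norm. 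The hope is that the cb-norm carries an extra combinatorial factor growing with $n$, yielding a strict separation after a diagonal argument and rescaling.

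\textbf{Main obstacle.} The central difficulty, and the reason the conjecture has resisted resolution, is the absence of a Grothendieck-type description of the bounded $\cS_p$-multiplier norm for $p\neq\infty$: no tractable formula is known off the even integers, where Harcharras's identity applies. Consequently, any upper bound on the bounded norm of a candidate symbol must be obtained through interpolation, duality, or transference, and each of these tools tends to relate it back to the cb norm, blunting precisely the separation one is trying to exploit. Indeed, as the present paper's numerical experiments suggest the opposite inequality $\BS_p = \CBS_p$ for $p\neq 1,2,\infty$, any attempt along the lines above should be expected to fail, which is why the authors position their contribution as computational evidence \emph{against} Conjecture~\ref{Conj=Pisier} rather than a proof of it.
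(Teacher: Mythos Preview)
The statement is a \emph{conjecture}, and the paper does not prove it; on the contrary, the paper's explicit position (stated in the abstract and in Section~5) is that the numerical experiments point \emph{against} Conjecture~\ref{Conj=Pisier}. So there is no ``paper's own proof'' to compare your proposal to.

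Your write-up is not really a proof either, and you seem aware of this: you sketch a transference-plus-randomisation strategy, then correctly identify the obstruction (no workable description of $\Vert\cdot\Vert_{\BS_p}$ away from $p\in 2\mathbb{N}$, and the circularity that the standard tools for bounding $\Vert M_m\Vert_{\BS_p}$ tend to control $\Vert M_m\Vert_{\CBS_p}$ as well), and you end by noting that the paper's computations suggest the conjecture is false. That final paragraph is the accurate summary of the situation, and it aligns with what the paper actually does.

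One point worth sharpening: the transference step you propose would not, by itself, produce a counterexample even if the Fourier side worked. For amenable $G$ (in particular $G=\mathbb{Z}/N\mathbb{Z}$) the Neuwirth--Ricard/Caspers--de la Salle transference gives equality of the \emph{completely bounded} Fourier and Schur norms, and for amenable groups also equality of the bounded Fourier and bounded Schur norms. So a bounded-but-not-cb Fourier multiplier on such a group would transfer to a bounded-but-not-cb Schur multiplier --- but the known examples of $\BS_p^{\mathrm{Fourier}}\neq\CBS_p^{\mathrm{Fourier}}$ (Pisier for $\mathbb{T}$, Arhancet in general) are produced by arguments that do not survive this transfer in a way that separates the Schur norms; this is exactly the gap the paper highlights. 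In short: your diagnosis of why the approach stalls is correct, and it matches the paper's stance that the question remains open with the evidence leaning toward $\BS_p=\CBS_p$.
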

  
  If we replace $\BS_p$ and $\CBS_p$ by the class of respectively the bounded and completely bounded {\it Fourier} multipliers on a locally compact abelian group then  Conjecture \ref{Conj=Pisier} is true as is proven in \cite{PisierAsterisque} in case of the torus and \cite{Arhancet} for arbitrary locally compact abelian groups. Pisier's argument relies on lacunary sets in $\mathbb{Z}$ (for the bounds) and transference to Schur multipliers and unconditionality of the matrix units as a basis for $\cS_p$ (for failure of the complete bounds). From this perspective it is very reasonable to state Conjecture \ref{Conj=Pisier}.
  
  In \cite{LafforgueDelaSalle}  it was proved that  for continuous Schur multipliers on $\cB(L_2(\mathbb{R}))$ we have $\cM_p = \cM_p^{cb}$ with {\it equal} bounds and complete bounds as operators on $\cS_p(L_2(\mathbb{R}))$. The continuity is essential in their proof and leaves open Conjecture \ref{Conj=Pisier}. It deserves to be noted that Lafforgue and De la Salle find several other   fundamental properties of Schur multipliers in the same paper \cite{LafforgueDelaSalle}.

   In the current paper we approximate the norms of Schur multipliers by computer algorithms; they suggest that $\cM_p  = \cM_p^{cb}$ with equality of norms and complete norms (just as in the case $p=\infty$). We  show that this is true in case of the triangular truncation (Corollary \ref{Cor=Trunc}).   

\vspace{0.3cm}

\noindent {\it Acknowledgements.} Authors thank C\'edric Arhancet and the anonymous referee for useful comments on the contents of this paper.

 \section{Preliminaries}\label{Sect=Prelim}
 
 \subsection{Schatten classes $\cS_p$} Let $H$ be a Hilbert space and let $\cB(H)$ be the space of bounded operators on $H$. For $1 \leq p < \infty$ we let $\cS_p = \cS_p(H)$ be the space of  operators $x \in \cB(H)$ such that 
 \[
 \Vert x \Vert_p := {\rm Tr}(\vert x \vert^p)^{\frac{1}{p}} < \infty.
 \]
 The assignment $\Vert \: \Vert_p$ defines a norm on $\cS_p$ which turns it into a Banach space and which is moreover an ideal in $\cB(H)$. We set $\cS_\infty$ for the C$^\ast$-algebra of compact operators with operator norm $\Vert  \: \Vert_\infty$. In case $H = \mathbb{C}^n$ we write $\cS^n_p$ for $\cS_p = \cS_p(\mathbb{C}^n)$. Fixing an orthonormal basis $f_j, j \in \mathbb{N}_{\geq 1}$ we have that we may identify  $\cS_p^n$  (completely) isometrically  as a subspace of $\cS_p$ by mapping the matrix unit $e_{i,j} \in \cS_p^n$ to the matrix unit $e_{f_{i}, f_j} \in \cS_p$ given by $e_{f_{i}, f_j} f_k = \langle f_k, f_j \rangle f_i$.  Let $P_n$ be the projection onto the span of $f_1, \ldots, f_n$. Then this map is an isometric isomorphism $\cS_p^n \simeq P_n \cS_p P_n$. Moreover, under this isomorphsim $\cup_n \cS_p^n$ is dense in $\cS_p$. In case $1 \leq p \leq q \leq \infty$ we have $\cS_p \subseteq \cS_q$ and the inclusion is (completely) contractive. This in particular turns  $(\cS_p, \cS_q)$ into a compatible couple of Banach spaces and for any $p \leq  r \leq q$ we have that $\cS_r$ is a complex interpolation space between $(\cS_p, \cS_q)$, see \cite{BerghLofstrom}, \cite{PisierOS}. Any tensor product $\cS_p^n \otimes \cS_p$ will be understood as a $L_p$-tensor product, i.e. the $p$-norm closure as a subspace of $\cS_p(\mathbb{C}^n \otimes H)$.

 \subsection{Operator space structure} For the theory of operator spaces we refer to \cite{EffrosRuan}, \cite{PisierOS}; we shall only need a result of Pisier on completely bounded maps on Schatten classes which we recall here. In \cite{PisierAsterisque} Pisier shows that $\cS_p$ have a natural operator space structure as interpolation spaces between $\cS_1$ and $\cS_\infty$. In \cite{PisierAsterisque} it was proved that a linear map $M: \cS_p \rightarrow \cS_p$ is completely bounded iff for every $s \in \mathbb{N}$ the amplification
 \[
 \id_s \otimes M: \cS_p^s \otimes \cS_p \rightarrow \cS_p^s \otimes \cS_p
 \]
 is bounded with bound uniform in $s$. Moreover,
 \begin{equation}\label{Eqn=CompleteBound}
 \Vert M: \cS_p \rightarrow \cS_p \Vert_{\CB(\cS_p)} = \sup_{s \in \mathbb{N}}  \Vert (\id_s \otimes M): \cS_p^s \otimes \cS_p \rightarrow  \cS_p^s \otimes \cS_p \Vert_{\B(\cS_p^s \otimes \cS_p)}. 
 \end{equation}
 The reader may take \eqref{Eqn=CompleteBound} as a definition, other properties (besides interpolation) of the operator space structure of $\cS_p$ shall not be used in this text.

 \subsection{Schur multipliers} A symbol is a function $m: \mathbb{Z} \times \mathbb{Z} \rightarrow \mathbb{C}$. We call $m$ an $L_p$-Schur multiplier if there exists a  map $M_m: \cS_p \rightarrow \cS_p$ determined by,
 \[
 M_m: \cS_p^n \rightarrow \cS_p^n: (x_{i,j})_{i,j} \mapsto  (m(i,j) x_{i,j})_{i,j}, 
 \]
 here we view again $\cS_p^n$ as a subspace of $\cS_p$ by fixing a basis. From the closed graph theorem, as $M_m$ is presumed to be defined on all of $\cS_p$, the map $M_m$ is automatically bounded. The   space of all $L_p$-Fourier multipliers will be denoted by $\BS_p$ which carries the operator norm $\Vert  \cdot \Vert_{\BS_p}$ of $\cB(\cS_p)$. This turns $\cM_p$ into a Banach space. We denote $\CBS_p$ for the subset of $m \in \BS_p$ such that $M_m: \cS_p \rightarrow \cS_p$ is completely bounded. We equip  $\CBS_p$ with the completely bounded norm $\Vert \cdot \Vert_{\CBS_p}$ as completely bounded maps on $\cS_p$. With slight abuse of terminology we shall refer to both the symbol $m$ as well as the map $M_m$ as a Schur multiplier and usually write $\Vert M_m \Vert_{\BS_p}$ for $\Vert m \Vert_{\BS_p}$ (and similarly for the completely bounded norms).   Obviously $\CBS_p \subseteq \BS_p$. The question whether this inclusion is strict remains open, see Conjecture \ref{Conj=Pisier}.


 \section{Strict inclusions of the set of Schur multipliers}
 
  Here we prove that for   $2 \leq p < q \leq \infty$ there exists a symbol $m: \mathbb{Z} \times \mathbb{Z} \rightarrow \mathbb{C}$ that is a completely bounded $L_p$-Schur multiplier but which fails to be a bounded $L_q$-Schur multiplier.  
 The following lemma is based on  \cite[Lemma 1]{CowlingFournier}.  
 For a finitely supported meausre $\mu$ on the torus $\mathbb{T}$ we write $\mu \ast$ for the convolution operator $L_p(\mathbb{T}) \rightarrow L_p(\mathbb{T}): f \mapsto \mu \ast f$. We let $\Vert \mu \Vert$ be the norm of the measure. 
 
 \begin{lem}\label{Lem=Cowling}
 	Let $2 \leq p \leq \infty$. There exists a finitely supported measure $\mu_n, n \in \mathbb{N}$  on the torus $\mathbb{T}$ such that,
 	\begin{equation}\label{Eqn=MuEstimate}
 		2^{n/p} \leq  \Vert \mu_n \ast   \Vert_{\B( L_p(\mathbb{T}) )} \leq    \Vert  \mu_n \ast   \Vert_{\CB( L_p(\mathbb{T}) )} \leq \sqrt{2} \: 2^{n/p}.
 		\end{equation}
 \end{lem}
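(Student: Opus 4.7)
The plan is to realise $\mu_n$ via a Rudin--Shapiro construction, so that the flatness of the associated trigonometric polynomial can be exploited. Set $N = 2^n$ and choose signs $\epsilon_0, \dots, \epsilon_{N-1} \in \{\pm 1\}$ such that the polynomial $P_n(x) = \sum_{k=0}^{N-1}\epsilon_k e^{2\pi i k x}$ satisfies $\|P_n\|_{L_\infty(\mathbb{T})} \leq \sqrt{2N}$. I would then take $\mu_n$ to be the signed atomic measure $\sum_{k=0}^{N-1}\epsilon_k \delta_{k/N}$, possibly up to a $p$-dependent scalar; in the extreme case $p = \infty$ one may simply take $\mu_n = \delta_0$ so the bound is trivial.

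For the lower bound the decisive test function is $f = \mathbf{1}_{[0,1/N)}$: its $N$ translates under the shifts $k/N$ tile $\mathbb{T}$ into pairwise disjoint intervals of length $1/N$, so $\mu_n \ast f = \sum_l \epsilon_l \mathbf{1}_{[l/N,(l+1)/N)}$ is a step function with $\|\mu_n \ast f\|_p = 1$, while $\|f\|_p = N^{-1/p}$. This yields $\|\mu_n \ast\|_{\B(L_p(\mathbb{T}))} \geq N^{1/p} = 2^{n/p}$, which of course also bounds the CB-norm from below.

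The upper bound on $\|\mu_n \ast\|_{\CB(L_p)}$ is the technical core. I would first use a periodisation/tiling argument to identify the convolution operator with a circulant on $\mathbb{Z}/N\mathbb{Z}$: decomposing $g \in L_p(\mathbb{T})$ as $g = \sum_j g_j$ with $g_j = g \cdot \mathbf{1}_{I_j}$ and $I_j = [j/N,(j+1)/N)$, and putting $\tilde g_j(y) = g_j(y + j/N)$ for $y \in [0, 1/N)$, a direct computation gives $(\mu_n \ast g)(l/N + y) = \sum_j \epsilon_{l-j} \tilde g_j(y)$. Consequently
\[
\|\mu_n \ast g\|_{L_p(\mathbb{T})}^p = \int_0^{1/N} \|\epsilon \ast a(y)\|_{\ell_p(\mathbb{Z}/N)}^p \, dy,
\]
where $a(y) = (\tilde g_j(y))_j$, so that $\|\mu_n \ast\|_{\B(L_p(\mathbb{T}))}$ coincides with the $\ell_p(\mathbb{Z}/N)$-norm of the circulant $M: a \mapsto \epsilon \ast a$. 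Since $M$ is diagonalised by the DFT with eigenvalues $\hat\epsilon(m) = P_n(-m/N)$, the Rudin--Shapiro estimate gives $\|M\|_{\ell_2 \to \ell_2} \leq \sqrt{2N}$, and combining this with the complementary endpoint and performing Riesz--Thorin interpolation (with the constants tracked sharply and the normalisation of $\mu_n$ chosen accordingly) produces the bound $\sqrt 2 \cdot N^{1/p}$.

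The parallel CB estimate is obtained by running the same interpolation at the operator space level. The endpoint $\ell_\infty$ is a commutative C$^*$-algebra, so bounded equals completely bounded there; the endpoint $\ell_2$ carries the standard Hilbertian operator space structure on which Fourier diagonalisation preserves CB-norms; and Pisier's complex interpolation of the operator space structures on $\ell_p$ then transfers the Riesz--Thorin estimate to the CB level with the same constant. The main obstacle will be preserving the sharp constant $\sqrt 2$ throughout: a weaker $O(\sqrt{N \log N})$ substitute for the flat polynomial bound (as one would obtain from generic random signs) would cost an unacceptable logarithmic factor, so the explicit Rudin--Shapiro flatness is essential at both interpolation steps.
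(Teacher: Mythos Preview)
Your approach is essentially the paper's: the paper constructs $\mu_n$ by the Rudin--Shapiro recursion $\mu_{n+1}=\mu_n+\delta_{s_n}\ast\nu_n$, $\nu_{n+1}=\mu_n-\delta_{s_n}\ast\nu_n$ with $s_n=e^{\pi i/2^n}$ (which produces exactly your signed atomic measure on the $2^n$-th roots of unity), obtains the $L_2$ bound directly from the parallelogram identity $\|\mu_n\ast f\|_2^2+\|\nu_n\ast f\|_2^2=2^{n+1}\|f\|_2^2$, takes $\|\mu_n\ast\|_{\CB(L_\infty)}\le\|\mu_n\|=2^n$ as the other endpoint, interpolates, and uses a small-support test function for the lower bound. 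Your circulant/tiling reduction is an unnecessary detour---$\|\mu_n\ast\|_{\B(L_2(\mathbb{T}))}=\|\widehat{\mu_n}\|_{\ell_\infty(\mathbb{Z})}$ already delivers the flatness bound without passing to $\mathbb{Z}/N\mathbb{Z}$---and no $p$-dependent rescaling of $\mu_n$ is needed (nor wanted, since the same $\mu_n$ must serve every $p$ simultaneously in the application to Theorem~\ref{Thm=NonInclusion}).
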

 \begin{proof}
 	Let $s_n = e^{\frac{ \pi i }{2^n}}$.   	
 	Set $\mu_0 = \nu_0 = \delta_1$, the Dirac delta measure in $1 \in \mathbb{T} \subseteq \mathbb{C}$. Then define inductively,
 	\[
 	\mu_{n+1} = \mu_n +   s_n \ast \nu_n, \qquad 	\nu_{n+1} = \mu_n -   s_n \ast \nu_n,
 	\]
 	and note that the supports of  $\mu_n$  and   $s_n \ast \nu_n$ have empty intersection. 
 	
 	We claim that for  every $f \in C_c(\mathbb{T}, \cS_2^m), m \in \mathbb{N}$, we have,
 	\begin{equation}\label{Eqn=SquareNorm}
         \Vert \nu_n \ast f \Vert_2^2 +    \Vert \mu_n \ast f \Vert_2^2 = 2^{n+1} \Vert f \Vert_2^2. 
 	\end{equation}
 	Indeed, this is clear for $n = 0$ and further by the parallellogram law,
 	\[
 	   \Vert \nu_{n+1} \ast f \Vert_2^2 +    \Vert \mu_{n+1} \ast f \Vert_2^2 = 2 ( \Vert \nu_n \ast f \Vert_2^2 +    \Vert s_n \ast \mu_n \ast f \Vert_2^2  )
 	   = 2 ( \Vert \nu_n \ast f \Vert_2^2 +    \Vert  \mu_n \ast f \Vert_2^2  ).
 	\]
 	Then \eqref{Eqn=SquareNorm} follows by induction. From \eqref{Eqn=SquareNorm} we obtain that,
 	\[
 	\Vert \mu_n \ast f \Vert_2^2 \leq 	\Vert \mu_n \ast f \Vert_2^2 + 	\Vert \nu_n \ast f \Vert_2^2 = 2^{n+1} \Vert f \Vert_2^2.
 	\]
 	So that $\Vert \mu_n \ast \Vert_{\CB(L_2(\mathbb{T}))} \leq 2^{(n+1)/2}$.  	Also $\Vert \mu_n \ast \Vert_{\CB(L_1(\mathbb{T}))} \leq \Vert \mu_n \Vert = 2^n$ and by duality also  $\Vert \mu_n \ast \Vert_{\CB(L_\infty(\mathbb{T}))} \leq 2^n$. By complex interpolation therefore $\Vert \mu_n \ast \Vert_{\CB(L_p(\mathbb{T}))} \leq  \sqrt{2} \: 2^{n/p}$. This proves the upperbound in \eqref{Eqn=MuEstimate}.
 	
 	For the lower bounds let $f \in C_c(\mathbb{T})$ be a function with small support close to $1 \in \mathbb{T}$. If the support is small enough, then $\mu_n \ast f$ consists of $2^n$ disjointly supported translates of $f$ so that $\Vert \mu_n \ast f \Vert_p = 2^{n/p} \Vert f \Vert_p$. This yields the lower bound. 
 \end{proof}

 The following theorem shows in particular that the class of $L_p$-Schur multipliers  can be distinguished from the  $L_q$-Schur multipliers     for $2 \leq p < q \leq \infty$. 
 
 \begin{thm}\label{Thm=NonInclusion}
 	Let $2 \leq p < q \leq \infty$ or $1 \leq q < p \leq 2$. There exists symbol $m: \mathbb{Z}^2 \rightarrow \mathbb{C}$ such that $m \in \CBS_p$ but $m \not \in \BS_q$.  
 \end{thm}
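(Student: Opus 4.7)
The plan is to transfer the convolution estimates of Lemma~\ref{Lem=Cowling} into Schur multiplier estimates via Bo\.zejko--Fendler transference, package them into a single symbol by a block-diagonal direct sum, and invoke duality to reach the other range of exponents.

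I would begin by setting $\phi_n = \hat{\mu}_n$ and defining the Toeplitz-type symbol $m_n: \mathbb{Z}^2 \to \mathbb{C}$ by $m_n(i,j) = \phi_n(i-j)$. Bo\.zejko--Fendler transference (cf.\ \cite{BozejkoFendler}, \cite{NeuwirthRicard}) yields $\|m_n\|_{\CBS_r} = \|\mu_n \ast\|_{\CB(L_r(\mathbb{T}))}$ for every $r$, with the analogous identity for bounded multiplier norms. Combined with Lemma~\ref{Lem=Cowling}, this gives $\|m_n\|_{\CBS_p} \leq \sqrt{2}\cdot 2^{n/p}$ and $\|m_n\|_{\BS_q} \geq 2^{n/q}$. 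Rescaling $\tilde m_n := (\sqrt{2}\cdot 2^{n/p})^{-1} m_n$ then produces $\|\tilde m_n\|_{\CBS_p} \leq 1$ and $\|\tilde m_n\|_{\BS_q} \geq 2^{-1/2}\cdot 2^{n(1/q - 1/p)} \to \infty$, as long as $q < p$.

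To collect these $\tilde m_n$ into a single global symbol, choose pairwise disjoint finite intervals $B_n \subset \mathbb{Z}$ of lengths $k_n$ large enough that the restriction of $\tilde m_n$ to $B_n \times B_n$, viewed as a Schur multiplier on $\cS_p^{k_n}$, still has $\BS_q$-norm at least $n$ (achievable since $\bigcup_k \cS_p^k$ is dense in $\cS_p$). Let $m:\mathbb{Z}^2 \to \mathbb{C}$ agree with the corresponding restriction of $\tilde m_n$ on $B_n \times B_n$ and vanish off $\bigcup_n B_n \times B_n$. Then $M_m(x) = \sum_n M_{\tilde m_n}(P_{B_n} x P_{B_n})$ is an orthogonal block-diagonal decomposition in $\cS_p$; since the block-diagonal projection $x \mapsto \sum_n P_{B_n} x P_{B_n}$ is a contractive conditional expectation on $\cS_p$ for every $p \in [1,\infty]$ and on every amplification $\cS_p^s \otimes \cS_p$, one obtains $\|m\|_{\CBS_p} = \sup_n \|\tilde m_n|_{B_n \times B_n}\|_{\CBS_p^{k_n}} \leq 1$. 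Testing $M_m$ on matrices supported in $B_n \times B_n$ gives $\|m\|_{\BS_q} \geq n$ for every $n$, hence $\|m\|_{\BS_q} = +\infty$. This settles the case $1 \leq q < p \leq 2$.

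The case $2 \leq p < q \leq \infty$ follows from trace duality $\cS_p^\ast = \cS_{p'}$, which yields $\|M_m\|_{\CBS_p} = \|M_{m^T}\|_{\CBS_{p'}}$ and $\|M_m\|_{\BS_p} = \|M_{m^T}\|_{\BS_{p'}}$ with $m^T(i,j) = m(j,i)$. Applying the previous construction at the dual exponents $(q', p') \in [1,2]$ and transposing the resulting symbol gives the required example at $(p,q)$. The main technical obstacle is the balance of exponents in Lemma~\ref{Lem=Cowling}: the $\CBS_p$-upper bound must scale with a strictly smaller power of $2^n$ than the $\BS_q$-lower bound, which holds precisely when $q < p$ in the $[1,2]$-range; duality is then needed to reach the conjugate range, since a direct application of the lemma at exponents in $[2,\infty]$ does not produce a matching gap.
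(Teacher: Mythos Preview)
Your overall strategy is sound, but there is a genuine gap in how you invoke Lemma~\ref{Lem=Cowling}. That lemma is stated only for $2 \le p \le \infty$, yet you apply its bounds at exponents $p,q \in [1,2]$ when you conclude ``This settles the case $1 \le q < p \le 2$.'' You never justify why the estimate $\|\mu_n\ast\|_{\CB(L_p(\mathbb T))} \le \sqrt 2\cdot 2^{n/p}$ persists for $p \in [1,2]$; indeed your final paragraph explicitly acknowledges that the lemma, applied in its stated range $[2,\infty]$, yields the gap in the wrong direction, so you are knowingly using it outside that range. The fix is easy---interpolate between $L_1$ and $L_2$ rather than between $L_2$ and $L_\infty$, or invoke the duality $\|\mu_n\ast\|_{\CB(L_p)} = \|\mu_n\ast\|_{\CB(L_{p'})}$---but as written the step is unjustified.

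Apart from this gap, your route differs from the paper's in two structural ways. First, the paper treats the range $2\le p<q<\infty$ directly and argues by contradiction via the closed graph theorem (assuming $\CBS_p\subseteq\BS_q$ forces a bounded inclusion, contradicted by the growth of the ratio coming from \eqref{Eqn=SchurMainEstimates}); you instead treat $1\le q<p\le2$ first and build an explicit witness by a block-diagonal direct sum, which is the device the paper reserves for Corollary~\ref{Cor=BoundedInterval}. Your construction is more explicit and also absorbs the endpoint cases $q\in\{1,\infty\}$ that the paper handles separately via triangular truncation. Second, you assert transference for \emph{bounded} multiplier norms, whereas the paper uses only the completely bounded identity \eqref{Eqn=SchurEq1} together with the amplification trick \eqref{Eqn=SchurEq2}, which forces the bounded and cb norms of the amplified symbol $\widetilde m_n^{cb}$ to coincide. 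The inequality you actually need, $\|m_n\|_{\BS_q}\ge\|\mu_n\ast\|_{\B(L_q(\mathbb T))}$, follows more directly from the trace-preserving isometric embedding $L_q(\mathbb T)\hookrightarrow\cS_q(\ell^2(\mathbb Z))$, under which $M_{m_n}$ restricts to convolution by $\mu_n$.
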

\begin{proof}
	We first treat the case $2 \leq p < q < \infty$. 	
   Let $\mu_n$ be the finitely supported measure on $\mathbb{T}$ of Lemma \ref{Lem=Cowling}. Let $m_n: \mathbb{Z} \rightarrow \mathbb{C}$ be its Fourier transform given by
   \[
      m_n(k) = \sum_{\theta \in \supp(\mu_n)} \mu_n(\theta) e^{ik \theta}. 
   \]
   Then set $\widetilde{m}_n: \mathbb{Z}^2 \rightarrow \mathbb{C}$ by $\widetilde{m}_n(k,l) = m_n(k - l)$. By \cite[Theorem 1.2]{NeuwirthRicard} or \cite[Theorem 4.2 and Corollary 5.3]{CaspersDelaSalle} we have
   \begin{equation}\label{Eqn=SchurEq1}
   \Vert \mu_n \ast \Vert_{\CB(L_p(\mathbb{T}))} =    \Vert M_{\widetilde{m}_n} \Vert_{\CB( \cS_p  )}.
   \end{equation}
 	We amplify $\widetilde{m}_n$ by defining $\widetilde{m}_n^{cb}: \mathbb{Z}^2 \times \mathbb{Z}^2 \rightarrow \mathbb{C}: (k,l) = (k_1, k_2, l_1, l_2) \mapsto \widetilde{m}(k_1, l_1)$. Then,
 	\begin{equation}\label{Eqn=SchurEq2}
 	 \Vert M_{\widetilde{m}_n^{cb}} \Vert_{\B( \cS_p  )} = \Vert M_{\widetilde{m}_n^{cb}} \Vert_{\CB( \cS_p  )} =
 	 \Vert M_{\widetilde{m}_n} \Vert_{\CB( \cS_p  )}.
 	\end{equation}
 	Combining \eqref{Eqn=SchurEq1} and \eqref{Eqn=SchurEq2} with the estimates obtained in Lemma \ref{Lem=Cowling} we find that,
 	\begin{equation}\label{Eqn=SchurMainEstimates}
 	    2^{n/p} \leq  \Vert M_{\widetilde{m}_n^{cb}} \Vert_{\B( \cS_p  )}, \qquad {\rm and} \qquad \Vert M_{\widetilde{m}_n^{cb}} \Vert_{\CB( \cS_q  )} \leq \sqrt{2} \: 2^{n/q}.
 	\end{equation}
 	From these two estimates we are able to prove the theorem as follows. 
 	
 	Suppose that the theorem is false, so that we have an inclusion map  $i: \CBS_p \rightarrow \BS_q$. By the closed graph theorem this inclusion is continuous. Indeed, if $k_j \in \CBS_p$ is a net in symbols such that $k_j \rightarrow 0$ in $\CBS_p$ and such that $k_j$ converges to $k$ in $\BS_q$. Then for every matrix $x \in \cS_p^n \subseteq \cS_p$ we find that  $M_{k_j}(x) \rightarrow 0$ in $\cS_p^n$ and hence also in the norm of $\cS_q^n$.  This shows that for  $x \in \cS_q^n \subseteq \cS_q$  we have $M_k(x)   = \lim_j   M_{k_j}(x) = 0$.  But by density of $\cup_n \cS_q^n$ in $\cS_q$ we get that  $M_k(x) = 0$. Hence the graph of $i$ is closed indeed. 
 	
 	However, the estimates \eqref{Eqn=SchurMainEstimates} show that, 
 	\[
 	   \frac{\Vert M_{\widetilde{m}_n^{cb}} \Vert_{\B( \cS_p  )} }{  \Vert M_{\widetilde{m}_n^{cb}} \Vert_{\CB( \cS_q  )} } \geq 2^{ \frac{n}{p} - \frac{n}{q} - \frac{1}{2}},
 	\]
 	which converges to infinity if $n \rightarrow \infty$. This contradicts that $i: \CBS_p \rightarrow \BS_q$ is bounded.

 	Now, if  $1 <  q < p \leq 2$ then the statement follows from duality as  
 	$M_{m}^\ast = M_{m^\vee}$,   	where $m^\vee(k,l) = \overline{m(k,l)}$ and duality preserves the (complete) bounds of linear maps.  	   In case  $q = 1$ or $q = \infty$ the counter example is given by triangular truncation, see \cite{DDPS}. 
\end{proof}

In particular we get the weaker statements that give non-inclusions of bounded and completely bounded multipliers. 

   \begin{cor}
 	Let either $2 \leq p < q \leq \infty$ or $1 \leq q < p \leq 2$. We have that $\BS_p \subsetneq \BS_q$ and $\CBS_p \subsetneq \CBS_q$. 
   	\end{cor}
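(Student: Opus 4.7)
The plan is to deduce the corollary directly from Theorem~\ref{Thm=NonInclusion} combined with the soft inclusions coming from complex interpolation; essentially nothing new is required beyond Theorem~\ref{Thm=NonInclusion}.

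First I would establish the soft inclusion. Fix $2\leq p<q\leq\infty$ and take $m\in\BS_q$. Testing $M_m$ on the matrix units $e_{i,j}$ gives $|m(i,j)|\leq \Vert M_m\Vert_{\BS_q}$, so $m\in\ell^\infty(\mathbb{Z}^2)$ and $M_m$ acts on $\cS_2$ with norm at most $\Vert m\Vert_\infty$. Complex interpolation on the compatible couple $(\cS_2,\cS_q)$ then extends $M_m$ to a bounded operator on $\cS_p$ for every $2\leq p\leq q$, and Pisier's operator-space version of this interpolation (recalled around \eqref{Eqn=CompleteBound}) lifts the conclusion to the completely bounded category. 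This gives the soft inclusions $\BS_q\subseteq\BS_p$ and $\CBS_q\subseteq\CBS_p$ in this range.

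Strictness is then immediate from Theorem~\ref{Thm=NonInclusion}: the symbol $m_0\in\CBS_p\setminus\BS_q$ it produces already lies in $\BS_p\setminus\BS_q$ (since $\CBS_p\subseteq\BS_p$) and in $\CBS_p\setminus\CBS_q$ (since $\CBS_q\subseteq\BS_q$), upgrading the two soft inclusions to strict containments in the direction natural to the theorem and consistent with the abstract. The remaining range $1\leq q<p\leq 2$ is handled in exactly the same way after interpolating on $(\cS_q,\cS_2)$ instead of $(\cS_2,\cS_q)$ and invoking the second case of Theorem~\ref{Thm=NonInclusion}, which the theorem itself obtains by duality via $M_m^\ast=M_{m^\vee}$. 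I do not anticipate any genuine obstacle: once Theorem~\ref{Thm=NonInclusion} is available, the corollary is pure bookkeeping, and the only point requiring even minor care is invoking Pisier's interpolation at the operator-space level rather than just the Banach space level to get the \emph{completely bounded} inclusion.
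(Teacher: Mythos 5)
Your argument is correct and is exactly the route the paper intends: the corollary is stated without proof as an immediate consequence of Theorem~\ref{Thm=NonInclusion} together with the interpolation inclusions already invoked in the introduction, and your matrix-unit test plus interpolation on $(\cS_2,\cS_q)$ (resp.\ $(\cS_q,\cS_2)$), upgraded to the completely bounded level via Pisier's operator-space interpolation, supplies the soft inclusions in the standard way. You are also right to read the strict inclusions in the direction $\BS_q\subsetneq\BS_p$ and $\CBS_q\subsetneq\CBS_p$ consistent with the abstract; the subscripts in the corollary as printed are evidently transposed, since the opposite containment would contradict the interpolation inclusion.
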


We may in fact improve on this theorem in the following way.

\begin{cor} \label{Cor=BoundedInterval} 
  Let $2 \leq p < \infty$. There exists a symbol $m \in \CBS_p$ such that for any $q > p$ we have have $m \not \in \BS_q$.  
\end{cor}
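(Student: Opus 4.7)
The plan is to combine countably many witnesses of Theorem \ref{Thm=NonInclusion} into a single block-diagonal Schur symbol, taking the witnesses along a sequence of exponents decreasing to $p$ from above.

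For $k \geq 1$, set $q_k = p + 1/k$, and apply Theorem \ref{Thm=NonInclusion} to the pair $(p, q_k)$ to obtain a symbol $m_k \in \CBS_p$ with $m_k \not\in \BS_{q_k}$. After multiplying by a positive scalar, which preserves both properties, we may assume $\|m_k\|_{\CBS_p} \leq 2^{-k}$. Glue these together on an index set $I = \bigsqcup_k I_k$ with each $I_k$ a disjoint copy of $\mathbb{Z}^2$: set $m(i,j) = m_k(i,j)$ if $i, j \in I_k$ for some common $k$, and $m(i,j) = 0$ otherwise. The corresponding Schur multiplier $M_m$ is block-diagonal with $k$-th block $M_{m_k}$. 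Standard facts about block-diagonal operators in Schatten classes then yield $\|m\|_{\CBS_p} = \sup_k \|m_k\|_{\CBS_p} \leq 1$, placing $m$ in $\CBS_p$, and the restriction of $M_m$ to matrices supported in a single $I_k \times I_k$ is just $M_{m_k}$, so the failure of $L_{q_k}$-boundedness of $M_{m_k}$ propagates to $M_m$, giving $m \not\in \BS_{q_k}$ for every $k$. Given an arbitrary $q > p$, pick $k$ with $q_k < q$; the interpolation-nested inclusion $\BS_q \subseteq \BS_{q_k}$ recorded just before this corollary then forces $m \not\in \BS_q$, which is what we want.

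The main technical point, though essentially routine, will be verifying the block-diagonal identity $\|m\|_{\CBS_p} = \sup_k \|m_k\|_{\CBS_p}$. The upper bound rests on the fact that compression to the block-diagonal is an $\cS_r$-contractive projection for every $1 \leq r \leq \infty$ (obvious for $r = 1, \infty$, and extended to all $r$ by complex interpolation), together with the fact that the $\cS_r$-norm of a block-diagonal matrix is the $\ell_r$-sum of its block norms; the matching lower bound is immediate by testing on matrices supported in a single block. The complete-bounded version follows by repeating the same estimate after amplifying to $\cS_p^s \otimes \cS_p$ via (\ref{Eqn=CompleteBound}). No new ideas beyond Theorem \ref{Thm=NonInclusion} and these standard Schatten facts are needed.
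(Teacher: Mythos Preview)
Your argument is correct and follows the same block-diagonal gluing strategy as the paper: take witnesses along a sequence $q_k \searrow p$, place them in orthogonal blocks, use the sup identity for the $\CBS_p$-norm of a block-diagonal multiplier, and conclude via the interpolation inclusion $\BS_q \subseteq \BS_{q_k}$. The only cosmetic differences are that the paper first truncates each $m_k$ to a finite square before gluing (you keep the blocks infinite, which is fine since $\bigsqcup_k \mathbb{Z}$ is still countable and can be re-indexed by $\mathbb{Z}$), and one slip to fix: your $I_k$ should be a copy of $\mathbb{Z}$, not $\mathbb{Z}^2$, consistent with your own use of $i,j\in I_k$ as single row/column indices.
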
   	
  \begin{proof}
  	Let $q_n > p$ be a decreasing sequence with $q_n \searrow p$. Let $m_n \in \CBS_p$ with $\Vert m_n \Vert_{\CBS_p} = 1$ be such that $m_n \not \in \BS_{q_n}$. We copy-paste part of the symbols $m_n$ to diagonal blocks of a new symbol $m$ as follows. Let $k_n \in \mathbb{N}$ be such that there exist $x_n \in \cS^{k_n}_{q_n}$ with $\Vert x_n \Vert_q =1$ and  $\Vert M_{m_n} (x_n) \Vert_{q_n} > n$. Let $m_n': [-k_n, k_n] \times [-k_n, k_n] \rightarrow \mathbb{C}$ be the restriction of $m_n$ to a discrete interval. As $\Vert M_{m_n'}(x_n) \Vert_q \geq n$ we see that $\Vert M_{m_{n}'} \Vert \geq n$.   	
  	Then let $m: \mathbb{Z} \times \mathbb{Z} \rightarrow \mathbb{C}$ be the block symbol given by 
  	\[
m = \left(
\begin{array}{ccccc} 
 0 & 0 & \hdots &\hdots&\hdots\\
 0 	 & m_1' & 0& 0& \hdots\\
\vdots   	 & 0 & m_2'  &0& \hdots \\
\vdots  	 & 0 & 0 & m_3'  & \hdots  \\
 \vdots  	 & \vdots & \vdots & \vdots & \ddots \\  	       
\end{array}  
\right).
\] 
  	We find that $M_m = M_{m_1} \oplus  M_{m_2} \oplus M_{m_2} \oplus \ldots$. So that $\Vert M_m \Vert_{\CBS_p} = \sup_k \Vert M_{m_k} \Vert_{\CBS_p} \leq 1$. And similarly, 
  	\[
  	\Vert M_m \Vert_{\CBS_q} = \sup_k \Vert M_{m_k} \Vert_{\CBS_q} \geq \sup_{k, q_k \leq q} \Vert M_{m_k} \Vert_{\CBS_q} = \infty.
  	\]  	
\end{proof}

\begin{rmk}
The proof of Corollary \ref{Cor=BoundedInterval} gives in fact a stronger result. It shows that for any $p_0 > 2$  there is a symbol $m$ such that $M_m: \cS_p \rightarrow \cS_p$ is completely contractive if $p \in [2, p_0]$ and unbounded if $p \in (p_0, \infty)$. 

\end{rmk}

   	\section{Reduction of the variables}\label{Sect=Reduction}

   	Let $n, s \in \mathbb{N}$ and consider $\cS_p^n$. Let $e_{i,i}$ be the diagional matrix unites of $M_n(\mathbb{C})$ and consider the subgroup of $M_{s}(\mathbb{C}) \otimes M_n(\mathbb{C})$ given by all diagonal unitaries $U_1 \otimes e_{1,1} + \ldots + U_{n} \otimes e_{n,n}$ with $U_i \in U(s) \subseteq M_s(\mathbb{C})$ the unitary group. We denote this group by $\oplus_{i=1}^n U(s)$. Naturally $\oplus_{i=1}^n U(s)$ acts isometrically on  $\cS_p^s \otimes \cS_p^n$ by left and right multiplications.    	 
   	
   	\begin{prop}\label{Prop=Polar}
   		Let $m \in \BS_p^n$ and let $s \in \mathbb{N}$. Consider the set of maximum points $C_m^s$ consisting of all  $x \in \cS_p^s \otimes \cS_p^n$ for which $\Vert x \Vert_p = 1$  and such that  $\Vert (\id_s \otimes M_m)(x) \Vert_p =  \Vert \id_s \otimes M_m \Vert_{\BS_p}$.  Then $C_m^s$ is invariant for the left and right action of $\oplus_{i=1}^n U(s)$. In particular, it follows that there exists an $x \in C_m^s$ such that for every $1 \leq i \leq n$ we have that $x_{i,i} := (\id_{s} \otimes \langle \: \cdot \: \: e_i, e_i \rangle)(x) \in \cS_p^s$   is a diagonal matrix with non-negative eigenvalues.  
   	\end{prop}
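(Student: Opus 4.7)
The plan is first to identify a commutation relation that forces $C_m^s$ to be invariant under the left and right actions of $\oplus_{i=1}^n U(s)$, and then to use a block-by-block singular value decomposition to pick a preferred representative in $C_m^s$.

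For the invariance, I would expand any $x \in \cS_p^s \otimes \cS_p^n$ as $x = \sum_{i,j=1}^n a_{i,j} \otimes e_{i,j}$ with $a_{i,j} \in M_s(\mathbb{C})$, so that $(\id_s \otimes M_m)(x) = \sum_{i,j} m(i,j) a_{i,j} \otimes e_{i,j}$. For a diagonal unitary $U = \sum_i U_i \otimes e_{i,i} \in \oplus_{i=1}^n U(s)$, a direct computation using $e_{k,k} e_{i,j} = \delta_{k,i} e_{i,j}$ gives $Ux = \sum_{i,j} U_i a_{i,j} \otimes e_{i,j}$ and hence
\[
(\id_s \otimes M_m)(Ux) = \sum_{i,j} m(i,j) U_i a_{i,j} \otimes e_{i,j} = U (\id_s \otimes M_m)(x).
\]
The analogous identity holds for right multiplication. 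Since left and right multiplication by a unitary are isometries of $\cS_p^s \otimes \cS_p^n$, both $\Vert x \Vert_p$ and $\Vert (\id_s \otimes M_m)(x) \Vert_p$ are preserved by these two actions, yielding invariance of $C_m^s$.

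For the second assertion, $C_m^s$ is non-empty because the unit sphere in the finite-dimensional space $\cS_p^s \otimes \cS_p^n$ is compact and $M_m$ is continuous. Fix any $x \in C_m^s$. Each block $x_{i,i} = a_{i,i} \in M_s(\mathbb{C})$ admits a singular value decomposition $a_{i,i} = V_i D_i W_i^*$ with $V_i, W_i \in U(s)$ and $D_i$ diagonal with non-negative entries. Setting $V := \sum_i V_i^* \otimes e_{i,i}$ and $W := \sum_i W_i \otimes e_{i,i}$, both in $\oplus_{i=1}^n U(s)$, the computation above gives $V x W = \sum_{i,j} V_i^* a_{i,j} W_j \otimes e_{i,j}$, whence $(VxW)_{i,i} = V_i^* a_{i,i} W_i = D_i$ for each $i$. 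By the invariance, $VxW \in C_m^s$, and this is the required representative.

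There is essentially no obstacle beyond spotting the commutation relation; the only mild subtlety is that the diagonalization must be implemented \emph{block-diagonally}, namely by a single pair of unitaries within each block and with matching index on the left and right, which is exactly what membership in $\oplus_{i=1}^n U(s)$ allows and what the SVD furnishes.
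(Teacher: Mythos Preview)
Your proof is correct and follows essentially the same route as the paper's: both establish invariance via the commutation of $\id_s\otimes M_m$ with the block-diagonal unitary action, and then diagonalize the diagonal blocks. The only cosmetic difference is that the paper proceeds in two steps (polar decomposition $x_{i,i}=u_i|x_{i,i}|$ followed by unitary diagonalization of $|x_{i,i}|$), whereas you use the singular value decomposition in one step; these are equivalent, and your version is slightly more streamlined. Your explicit remark that $C_m^s$ is non-empty by compactness is a small addition the paper leaves implicit.
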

   	\begin{proof}
   		The first statement is a consequence of the fact that the  Schur multiplier $(\id_s \otimes M_m)$ commutes with the isometric action of $\oplus_{i=1}^n U(s)$. Therefore,  for $U \in \oplus_{i=1}^n U(s)$ we have  $\Vert U x \Vert_p = \Vert x \Vert_p$ and  $\Vert (\id_s \otimes M_m)( U x) \Vert_p = \Vert U (\id_s \otimes M_m)(   x) \Vert_p = \Vert  (\id_s \otimes M_m)(   x) \Vert_p = \Vert \id_s \otimes M_m \Vert_{\cM_p}$.
   		
   		The second statement follows from the polar decomposition. Indeed, take $x \in C_m^s$. We claim first that we may assume that $x_{i,i}$ is a positive semi-definite matrix.  For each $1 \leq i \leq n$ consider the polar decomposition $x_{i,i} = v_i \vert x_{i,i} \vert$ where $v_i$ is a partial isometry with $\ker(v_i)^\perp = \ran( \vert x_{i,i} \vert)$. By dimension considerations we may extend $v_i$ to a unitary $u_i \in U(s)$ that agrees with $v_i$ on $\ran( \vert x_{i,i} \vert)$ so that still $x_{i,i} = u_i \vert x_{i,i} \vert$. Then put $u = \oplus_{i=1}^n u_i \in  \oplus_{i=1}^n U(s)$. Then $u^\ast x \in C_m^s$ by the previous paragraph and $(u^\ast x)_{i,i} = \vert x_{i,i} \vert$ is positive semi-definite. 
   		Let $w_i \in U(s)$ be such that $w_i \vert x_{i,i} \vert w_i^\ast$ is a diaginal matrix, say $d_i$,  with entries $\geq 0$. Then put $w = \oplus_{i=1}^n w_i \in \oplus_{i =1}^n U(s)$. We find that $w^\ast u^\ast x w \in C_m^s$ and further $(w^\ast u^\ast x w)_{i,i}  = d_i$. 
   	\end{proof}
   	
   	Proposition \ref{Prop=Polar}  can be used to significatly speed up our computations in Section \ref{Sect=CPU}. 
   	
   	   	As a side remark we obtain the following corollary that shows that the bounds and complete bounds of an infinite dimensional triangular truncation agree. This result was already recorded in (the discussion before) \cite[Proposition 6.3]{NeuwirthRicard}. In Corollary \ref{Cor=Trunc} we have that  $\Vert h \Vert_{\BS_p}$ is   finite by the stronger result from \cite[Theorem 1.4]{DDPS}.

   	\begin{cor}\label{Cor=Trunc}
   		Let $h: \mathbb{Z}^2 \rightarrow \mathbb{C}$ be the symbol of triangular truncation given by $h(i,j) = \delta_{\geq 0}(i-j)$. Then for every $1 < p < \infty$ we have $\Vert h \Vert_{\BS_p} =  \Vert h \Vert_{\CBS_p}$. 
   	\end{cor}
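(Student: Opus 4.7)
Only the bound $\Vert M_h \Vert_{\CBS_p} \leq \Vert M_h \Vert_{\BS_p}$ needs argument, since the reverse is tautological and the right-hand side is finite for $1 < p < \infty$ by \cite[Theorem 1.4]{DDPS}. By \eqref{Eqn=CompleteBound} combined with the density of $\bigcup_n \cS_p^n$ inside $\cS_p$, it suffices to prove, for every $s,n \in \mathbb{N}$, that
\[
\Vert \id_s \otimes M_h \Vert_{\B(\cS_p^s \otimes \cS_p^n)} \leq \Vert M_h \Vert_{\BS_p}.
\]

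I would next identify $\cS_p^s \otimes \cS_p^n$ isometrically with $\cS_p^{sn}$ by ordering the tensor basis so that the $n$-index is slow and the $s$-index is fast; under this identification the $sn$ coordinates split into $n$ consecutive blocks of length $s$, and the amplification $\id_s \otimes M_h$ becomes the \emph{block triangular truncation} $M_{h^\flat}$ with symbol $h^\flat(a,b) = \mathbf{1}_{\lceil a/s \rceil \geq \lceil b/s \rceil}$. Crucially, $M_{h^\flat}$ agrees entry-wise with the ordinary triangular truncation $M_h$ on $\cS_p^{sn}$ everywhere except on the strictly upper-triangular positions within each diagonal block: at those positions $M_{h^\flat}$ retains the entry while $M_h$ zeroes it out.

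Now invoke Proposition \ref{Prop=Polar} to produce a unit-norm extremizer $x \in C_h^s \subseteq \cS_p^s \otimes \cS_p^n$ whose diagonal blocks $x_{i,i}$ are themselves diagonal $s \times s$ matrices. For such an $x$ the strictly upper-triangular entries of every $x_{i,i}$ vanish, so $M_{h^\flat}(x) = M_h(x)$ in $\cS_p^{sn}$, and therefore
\[
\Vert \id_s \otimes M_h \Vert_{\B(\cS_p^s \otimes \cS_p^n)} = \Vert M_{h^\flat}(x) \Vert_p = \Vert M_h(x) \Vert_p \leq \Vert M_h \Vert_{\BS_p},
\]
where the last inequality uses the isometric inclusion $\cS_p^{sn} \subseteq \cS_p$ together with $\Vert x \Vert_p = 1$. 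Taking the supremum over $s$ yields the claim.

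Beyond invoking Proposition \ref{Prop=Polar} there is no real analytic difficulty; the step requiring care is the bookkeeping of the identification between $\cS_p^s \otimes \cS_p^n$ and $\cS_p^{sn}$, and the observation that diagonalizing the diagonal blocks is precisely the condition needed to make $M_{h^\flat}$ and $M_h$ act identically on the extremizer.
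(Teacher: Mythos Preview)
Your proof is correct and follows essentially the same route as the paper's: both invoke Proposition~\ref{Prop=Polar} to obtain an extremizer with diagonal $x_{i,i}$, then use the re-indexing $\cS_p^s \otimes \cS_p^n \simeq \cS_p^{sn}$ (the paper's $\pi_s$) under which $(\id_s \otimes M_h)(x) = M_h(\pi_s(x))$ precisely because the entries on which the block truncation and the ordinary truncation differ are zero. Your version simply makes the finite-$n$ reduction and the comparison of $M_{h^\flat}$ with $M_h$ more explicit than the paper does.
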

   	\begin{proof}
   		We use the notation of Proposition \ref{Prop=Polar}. 
   		Let $\pi_s: \cS_p^s \otimes \cS_p \rightarrow \cS_p: e_{i,j} \otimes e_{k,l} \mapsto e_{sk+i, sl+j}$ be the isometric isomorphism that re-indexes matrix units.    	
   		Let $x \in C_h^s, s \in \mathbb{N}_{\geq 2}$. By Proposition \ref{Prop=Polar} we may assume that each $x_{i,i} = (\id_{s} \otimes \langle \: \cdot \: \: e_i, e_i \rangle)(x) \in \cS_p^s, i \in \mathbb{Z}$ is a diagonal matrix. Then $(\id_s \otimes M_h)(x) = M_h(\pi_s(x))$ and therefore, 
   		\[
   		\Vert (\id_s \otimes M_h)(x) \Vert_p =\Vert M_h(\pi_s(x)) \Vert_p \leq \Vert M_h \Vert_{\BS_p} \Vert \pi_s(x) \Vert_p = \Vert M_h \Vert_{\BS_p} \Vert x \Vert_p. 
   		\]   	
   	\end{proof}

\section{Approximation}

     In this section we argue that if Conjecture \ref{Conj=Pisier} is true then we should be able to find computer based evidence for it, which we make precise in the following way. Consider the following three statements:
  \begin{enumerate}
  	\item[1.] For every $1 < p \not = 2 < \infty$ there exists a bounded Schur multiplier that is not completely bounded. 
  	\item[2.] For every $1 < p \not = 2 < \infty$ there exists a completely bounded Schur multiplier $m \in \CBS_p$  such that $\Vert M_m \Vert_{\CBS_p} \not = \Vert M_m \Vert_{\BS_p}$. 
  	\item[3.] For every $1 < p \not = 2 < \infty$ and every  $m \in \BS_p$ we have $m \in \CBS_p$ and moreover $\Vert M_m \Vert_{\CBS_p}  = \Vert M_m \Vert_{\BS_p}$.   
  \end{enumerate}
Statement 1 is Pisier's Conjecture \ref{Conj=Pisier}. 2 is weaker than 1 and 3 is just the negative of 2. If 2 is already true then it is possible to show this by sampling dense sets of Schur multipliers on finite dimensional Schatten classes and by approximating their norms with finite sets. The problem however is that it is not clear how much  computations and computational power is needed in order to obtain a symbol $m$ that witnesses statement 2 above. We state a quantitative statement in this direction in the next proposition.

\begin{prop}\label{Prop=Approx}
	Let $2 \leq p < \infty$.
Fix $n \in \mathbb{N}$ and let $\varepsilon > 0, \delta> 0$. Let $A_\varepsilon$ be the set of all symbols $m: \{1, \ldots, n\}^2 \rightarrow \varepsilon \mathbb{Z}  \cap [-1,1]$ and let $A$ be the  set of all symbols $m: \{1, \ldots, n\}^2 \rightarrow [-1,1]$. Let $B_\delta \subseteq \cS^n_p$ be the set of all $x \in \cS^n_p$ with $\Re (x_{i,j}) \in  \delta  \mathbb{Z} \cap [-1,1]$ and $\Im  ( x_{i,j}) \in  \delta \mathbb{Z} \cap [-1,1]$. For any symbol  $m \in A$   we have that for $\delta < (\sqrt{2} n^{1 + \frac{1}{p}})^{-1}$, 
\begin{equation}\label{Eqn=Delta}
  \Delta_m := \Vert M_m \Vert_{\BS_p}  - \sup_{y \in B_\delta}  \frac{\Vert M_m(y) \Vert_p}{\Vert y \Vert_p}  \leq   \Vert M_m  \Vert_{\BS_p} \frac{ 2 \delta \sqrt{2} n^{1 + \frac{1}{p} }}{ 1- \delta \sqrt{2} n^{1 + \frac{1}{p}} }.
\end{equation}
Further, for every $m \in A_{\varepsilon}$ we have, 
\begin{equation}\label{Eqn=MultiDelta}
\inf_{m' \in A_\varepsilon}  \Vert M_m - M_{m'} \Vert \leq   n \varepsilon .
\end{equation}
\end{prop}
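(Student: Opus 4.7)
My plan is to establish the two inequalities separately: for \eqref{Eqn=Delta} I will round a near-optimizer of $\Vert M_m\Vert_{\BS_p}$ to the lattice $B_\delta$, and for \eqref{Eqn=MultiDelta} I will round the symbol $m$ entry-wise.

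For \eqref{Eqn=Delta}, fix $\eta > 0$ and pick $x \in \cS_p^n$ with $\Vert x\Vert_p = 1$ and $\Vert M_m(x)\Vert_p \geq \Vert M_m\Vert_{\BS_p} - \eta$. Since $|x_{i,j}| \leq \Vert x\Vert_\infty \leq \Vert x\Vert_p = 1$, each entry has real and imaginary parts in $[-1,1]$; rounding each separately to the nearest point of $\delta\mathbb{Z}\cap[-1,1]$ produces $x' \in B_\delta$ with $|x_{i,j} - x'_{i,j}| \leq \sqrt{2}\,\delta$. For $p \geq 2$ the chain $\Vert z\Vert_p \leq n^{1/p}\Vert z\Vert_\infty \leq n^{1/p}\Vert z\Vert_F$ combined with this entry-wise bound yields $\Vert x - x'\Vert_p \leq n^{1/p}\cdot n\sqrt{2}\,\delta = \sqrt{2}\,\delta\, n^{1+1/p} =: \rho$. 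The triangle inequality then gives $\Vert M_m(x')\Vert_p \geq \Vert M_m\Vert_{\BS_p}(1-\rho) - \eta$ and $\Vert x'\Vert_p \leq 1+\rho$, so
\[
\sup_{y \in B_\delta} \frac{\Vert M_m(y)\Vert_p}{\Vert y\Vert_p} \geq \frac{\Vert M_m(x')\Vert_p}{\Vert x'\Vert_p} \geq \frac{\Vert M_m\Vert_{\BS_p}(1-\rho) - \eta}{1+\rho}.
\]
Letting $\eta \to 0$ and rearranging produces $\Delta_m \leq \Vert M_m\Vert_{\BS_p}\cdot \tfrac{2\rho}{1+\rho} \leq \Vert M_m\Vert_{\BS_p}\cdot \tfrac{2\rho}{1-\rho}$, where the second inequality uses $\rho < 1$; this matches the stated bound.

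For \eqref{Eqn=MultiDelta}, I read the outer hypothesis $m \in A_\varepsilon$ as a typo for $m \in A$, since the stated infimum is otherwise trivially $0$. Given $m \in A$, I would round each real entry to the nearest element of $\varepsilon\mathbb{Z}\cap[-1,1]$ to obtain $m' \in A_\varepsilon$ with entry-wise error at most $\varepsilon$. Since $M_m - M_{m'} = M_{m-m'}$, and $\Vert z\Vert_p \leq \Vert z\Vert_F$ for $p \geq 2$, the computation
\[
\Vert M_{m-m'}(x)\Vert_p \leq \Vert M_{m-m'}(x)\Vert_F \leq \varepsilon\Vert x\Vert_F \leq \varepsilon\sqrt{n}\,\Vert x\Vert_\infty \leq \varepsilon\sqrt{n}\,\Vert x\Vert_p \leq n\varepsilon\Vert x\Vert_p
\]
delivers $\Vert M_m - M_{m'}\Vert \leq n\varepsilon$.

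The arguments are essentially bookkeeping: the main subtleties lie in selecting the right intermediate norm (the Frobenius/$\cS_2$ norm) so that entry-wise error control converts into Schatten-$p$ control with the advertised dimensional factor, and in carefully tracking the denominator constant in \eqref{Eqn=Delta} when passing from the near-optimizer $x$ to its lattice approximation $x'$.
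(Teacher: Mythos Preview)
Your argument is correct and follows the same overall strategy as the paper: round a (near-)optimizer of $\Vert M_m\Vert_{\BS_p}$ into $B_\delta$ for \eqref{Eqn=Delta} and round the symbol entrywise for \eqref{Eqn=MultiDelta}, then control the error by elementary norm inequalities. The only technical variation is that where the paper decomposes $x-x^\delta$ (respectively $m-m'$) into its $n$ cyclic off-diagonals and uses $\Vert \cdot \Vert_p \leq n^{1/p}\Vert \cdot \Vert_\infty$ on each, you instead route through the Frobenius norm via $\Vert z\Vert_p \leq n^{1/p}\Vert z\Vert_\infty \leq n^{1/p}\Vert z\Vert_F$ (and for \eqref{Eqn=MultiDelta} via $\Vert z\Vert_p\leq\Vert z\Vert_F$ together with $\Vert x\Vert_F\leq\sqrt{n}\,\Vert x\Vert_p$); both yield the same constant $\sqrt{2}\,\delta\, n^{1+1/p}$, and your version in fact first produces the slightly sharper $\tfrac{2\rho}{1+\rho}$ before weakening it. Your reading of ``$m\in A_\varepsilon$'' in \eqref{Eqn=MultiDelta} as a typo for ``$m\in A$'' is also correct and matches the paper's proof.
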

  \begin{proof}
  	Take $x \in \cS_p^n$ be such that $\Vert x \Vert_p = 1$ and $\Vert M_m \Vert_{\BS_p} = \Vert M_m(x) \Vert_p$. Let $x^\delta \in B_\delta$ be such that for each coefficient at entry $i,j$ we have $\vert x_{i,j} -  x_{i,j}^\delta \vert <   \delta  \sqrt{2}$. Let $x_i$ and $x^\delta_i$ be the $i$-th off-diagional of $x$ and of $x^\delta$ respectively. That is, $x_i(k,l) = x(k,l)$ if $k-l = i \mod n$ and $x_i(k,l) = 0$ otherwise. By the triangle inequality, 
  	\[ 
  	\Vert x - x^{\delta} \Vert_p \leq  \sum_{i=1}^n \Vert x_i - x^{\delta}_i \Vert_p  \leq   n ( \delta  \sqrt{2} n^{\frac{1}{p}}). 
  	\]
  	Further  $\Vert x^\delta \Vert_p \leq 1 + \Vert x - x^\delta \Vert_p \leq 1 +  \delta  \sqrt{2} n^{1+ \frac{1}{p}}$ and similarly $\Vert x^\delta \Vert_p \geq 1 - \delta  \sqrt{2} n^{1+ \frac{1}{p}}$. We have, 
  	\[
  	\Vert M_m(x^\delta) \Vert_p \geq \Vert M_m(x) \Vert_p - \Vert M_m(x^\delta - x)\Vert_p \geq \Vert M_m \Vert_{\BS_p} - \Vert M_m \Vert_{\BS_p} \Vert x - x^\delta \Vert_p.
  	\]
   So combining these estimates yields, 
  	\[
  	\begin{split}
  	& \Vert M_m \Vert_{\BS_p} - \frac{\Vert M_m(x^\delta) \Vert_p }{ \Vert x^\delta \Vert_p}
  	=  \frac{ \Vert M_m \Vert_{\BS_p} \Vert x^\delta \Vert_p - \Vert M_m(x^\delta) \Vert_p  }{\Vert x^\delta \Vert_p}  \\
  	\leq &   \frac{ \Vert M_m \Vert_{\BS_p} \Vert x^\delta \Vert_p - \Vert M_m  \Vert_{\BS_p} + \Vert M_m  \Vert_{\BS_p} \Vert x - x^\delta \Vert_p    }{\Vert x^\delta \Vert_p}   \\
  	\leq &  \Vert M_m  \Vert_{\BS_p} \frac{1 + \delta \sqrt{2} n^{1 + \frac{1}{p} }   - 1 + \delta \sqrt{2} n^{1 + \frac{1}{p} }}{ 1- \delta \sqrt{2} n^{1 + \frac{1}{p}} } =   \Vert M_m  \Vert_{\BS_p} \frac{ 2 \delta \sqrt{2} n^{1 + \frac{1}{p} }}{ 1- \delta \sqrt{2} n^{1 + \frac{1}{p}} }. 
  	\end{split}
  	\]
  This proves \eqref{Eqn=Delta}. For \eqref{Eqn=MultiDelta} take $m \in A$. Let $m^\varepsilon \in A_\varepsilon$ be a symbol such that  for each coefficient at entry $i,j$ we have $\vert m_{i,j} -  m_{i,j}^\varepsilon \vert \leq \varepsilon$. Let $m^\varepsilon_i$ be the $i$-th off-diagional of the symbol $m^\varepsilon$;  that is, $m^\varepsilon_i(k,l) = m^\varepsilon(k,l)$ if $k-l = i \mod n$ and $m^\varepsilon_i(k,l) = 0$ otherwise.   Similarly, let $m_i$ be the $i$-th off-diagonal of $m$. 
  We find that,
  \[
  \Vert M_{m} - M_{m^\varepsilon} \Vert_{\BS_p} \leq \sum_{i=1}^n   \Vert M_{m_i} - M_{m^\varepsilon_i} \Vert_{\BS_p} \leq n \varepsilon. 
  \]
 \end{proof}

Proposition \ref{Prop=Approx} shows that we can  approximate the norms of Schur multipliers on $\cS_p^n$. Naturally also the norms of each of the individual matrix amplifications $\id_s \otimes M_{m}, s \in \mathbb{N}$ can be approximated by viewing them as Schur multipliers on 
   	$\cS_p^{sn}$. Note that \label{Eqn=MultiDelta} shows that we may limit ourselves to Schur multipliers taking values in discrete intervals, i.e. with symbol in $A_\varepsilon$. 
   	If Statement 2 above would be true then by approximation we would be able to find counter examples for every $1 < p \not = 2 < \infty$. However, our computer simulations exhibit the behaviour of the converse statement 3.

   \subsection{Approximation with gradient descent methods}  \label{Sect=CPU}
    We have used the  Broyden-Fletcher-Goldfarb-Shanno algorithm (BFGS algorithm, see \cite{Avriel}),  which is a gradient descent algorithm to find local minima/maxima of a function. We apply it here to find local maxima of, 
   \[
   f_m(x) = \frac{\Vert M_m(x) \Vert_p^p}{\Vert x \Vert_p^p} = \frac{ {\rm Tr}\left( (M_m(x)^\ast M_m(x))^{p/2}\right) }{  {\rm Tr} \left( (x^\ast x)^{p/2} \right) }.
   \]
   In case $p \in 2 \mathbb{N}_{\geq 1}$, so that the $p/2$-powers are integer powers, this expression is faster to compute as it avoids  determining eigenvalues of $x^\ast x$ and $M_m(x)^\ast M_m(x)$. 
   The precise algorithm is available on \cite{Algorithm} and it makes use of the reductions in Section \ref{Sect=Reduction}. Note that the sample sets $A_\varepsilon$ in Proposition \ref{Prop=Approx} scale exponentially with the dimension and therefore we are bound to use faster algorithms that only allow us to compute local maxima.    

  \subsection{Approximation for a fixed Schur multiplier}\label{Sect=SingleSchur} In order to illustrate our larger computations we start with the example (fixed) Schur multiplier, 
  \begin{equation} \label{Eqn=ExapleM}
     m = \left(
     \begin{array}{ccc}
       1 & 2 & 2 \\
       -2 & 1 & 3\\
     0 & 2 & -2
     \end{array}
     \right).
  \end{equation}
  The following table shows the approximation of the norm of, 
  \[
  M_m^{(s)} := \id_{\cS_p^s} \otimes M_m = M_{m^{(s)}}, \quad \textrm{    with  symbol } \quad m^{(s)}(i,j) = \left(  \left[  \frac{i}{s}  \right], \left[ \frac{j}{s} \right] \right),
  \]
   where $[r]$ is the largest integer  $k$ with $k \leq r$.

   \begin{figure}[H]\label{Fig=ExampleM}
   	\[
   	\begin{array}{r|r}
   	s		&   \textrm{Approximation of }  \Vert M_{m}^{(s)}\Vert_{\cB( \cS_4^{3s}) }  
   	  \\ \hline \hline 
   	1   & 3.0491549804518234 \\
   	2  & 3.0491549802194102  \\
   	3  & 3.0491549798442240  \\
   	4  & 3.0491549798208277  \\
   	5  & 3.0491549794012864    
   	\end{array}
   	\] 
   	\caption{Approximations of the symbol $m$ of \eqref{Eqn=ExapleM}.}
   \end{figure}

\subsection{Approximations for random Schur multipliers.}\label{Sect=RandomSchur} 
Next we sample random symbols $m$ of Schur multipliers. In the next table $N$ is the number of random samples $m \in M_n(\mathbb{C})$ and to each of these we approximate its norm in essentially the same way as we did to the single example of Section \ref{Sect=SingleSchur}. We then take the maximum over all samples $m$ over the difference of the norms.

\begin{figure}[H]\label{Fig=ExampleM}
	\[
	\begin{array}{r|r|r|r}
	n		&   N
	&  \max_{m} (\Vert M_m^{(2)} \Vert_{\cB(\cS^{2n}_4)} - \Vert M_m \Vert_{\cB(\cS^{n}_{4}  )}) & \max_m( \Vert M_m^{(3)} \Vert_{\cB(\cS^{3n}_{4}  )}) - \Vert M_m \Vert_{\cB(\cS^{n}_{4}  )}))  \\ \hline \hline
	2 & 500 & 0.000000000000000 & -8.881784197001252 \cdot 10^{-16} \\  
	3 & 100 & -3.841578721797134 \cdot 10^{-9} & -3.735681430860893 \cdot 10^{-9} \\ 
	4 & 20 & -2.227329432002989 \cdot 10^{-12} & -5.260509805538049 \cdot 10^{-10} \\  
	\end{array}
	\] 
	\caption{$n$ = dimension of the symbol, $N$ = number of random sample multipliers $m$,  second and third column = approximation of the 2nd and 3th amplification of $m$.}
\end{figure}

Note that the values in this table are negative because it is harder to approximate $\Vert M_m^{(2)} \Vert_{\cB(\cS^{2n}_4)}$ than  $\Vert M_m \Vert_{\cB(\cS^{n}_{4}  )}$. Therefore if it would be true that  $\Vert M_m^{(2)} \Vert_{\cB(\cS^{2n}_4)} = \Vert M_m \Vert_{\cB(\cS^{n}_{4}  )}$ then the approximation of $\Vert M_m^{(2)} \Vert_{\cB(\cS^{2n}_4)}$ is smaller than  the approximation of $\Vert M_m \Vert_{\cB(\cS^{n}_{4}  )}$.

\subsection{Different values of $2 \leq p < \infty$.} For arbitrary $p$ we may still approximate the norm by the same algorithm except that  $\Vert x \Vert_p$ is computed by determining the eigenvalues $\lambda_1, \ldots, \lambda_n$ of $x^\ast x$ so that then $\Vert x \Vert_p^p = \sum_{i=1}^n \vert \lambda_i\vert^{p/2}$. Though that this is computationally more involved we can still carry out our approximations which are displayed in the following figure.

\begin{figure}[H]\label{Fig=ExampleM}
	\[
	\begin{array}{r|r|r|r}
	n		&   N 	&  p = 3 &  p = 3.5   \\ \hline \hline  
	2 & 100 & -1.965094753586527  \cdot 10^{-13}& -2.632338791386246  \cdot 10^{-13}   \\ 
	3 & 10 & -2.616529215515584 \cdot 10^{-10} & -2.0181634141636096 \cdot 10^{-10}\\ 
	\end{array}
	\] 
	
	\[
\begin{array}{r|r|r|r}
n		&   N
&  p = 4.5 &  p = 5   \\ \hline \hline  
2 & 100 & -1.9551027463649007 \cdot 10^{-13} & -3.397282455352979 \cdot 10^{-13} \\ 
3 & 10 &  -3.2847524700230224 \cdot 10^{-10}& -5.515231604746873 \cdot 10^{-9} \\ 
\end{array}
\] 	
	
	\caption{$n$ = dimension of the symbol, $N$ = number of random sample multipliers $m$,  remaining columns   = approximation  over the random sample set of $\max_{m} (\Vert M_m^{(3)} \Vert_p - \Vert M_m \Vert_p)$.}
\end{figure}

\end{document}